	\theoremstyle{definition}
		\newtheorem{thm}{Theorem}
		\newtheorem{lemma}[thm]{Lemma}
		\newtheorem{defin} [thm] {Definition}
		\newtheorem{prop}[thm]{Proposition}
		\newtheorem{ques}[thm]{Question}
        \newtheorem{cor}[thm]{Corollary}
	\theoremstyle{remark}
		\newtheorem*{rmk}{Remark}
\title{An Elementary Approach on Left-Orderability, Cables of Torus Knots and Dehn Surgery}
\author{Jianhui Li and Chun-Yin Siu}
\begin{document}

\renewcommand{\thefootnote}{\fnsymbol{footnote}}
\footnotetext{
2010 Mathematics Subject Classification: 57M25 (primary), 57M50 and 57M99}
\footnotetext{keywords: Dehn surgery, left order, cable knot, L-space}
\renewcommand{\thefootnote}{\arabic{footnote}}

\maketitle

\begin{abstract}
Motivated by Clay and Watson\text{'}s question \cite{CW decay} on left-orderability of the fundamental group of the resultant space of an $r'$-surgery on the $(p, q)$-cable knots for $r' \in (pq-p-q,pq)$, this paper proves by elementary means that for specific pairs of $(p,q)$-cable knots of torus knots, $r' \in [pq-1,pq]$ gives a surgery yielding non-left orderable fundamental groups.

\end{abstract}

\section{Introduction} \label {Introduction}

A non-trivial group is said to be left-orderable iff there is a strict total order on the group that is preserved by left multiplication. Motivated by Clay and Watson\text{'}s \cite{CW decay} result on decayed knots, this paper studies left orderability of the fundamental groups of a special class of spaces obtained from performing Dehn surgery on cable knots. In Clay and Watson\text{'}s \cite{CW decay} paper, the following question was raised.

\begin{ques} \label{CW ques}
Let $r_0$ be a positive rational number and $K$ be a nontrivial knot in $S^3$ with canonical basis $ \{\mu, \lambda \}$. Denote $\mu^m \lambda^n$ by $\alpha_{m/n}$ whenever $m$ and $n$ are relatively prime. Suppose for every left ordering on $\pi_1(K)$, either $\alpha_r$ is positive for every $r > r_0$, or $\alpha_r$ is nonpositive for every $r > r_0$. If $C$ is a $(p, q)$-cable of $K$ with $q/p > r$, is $\pi_1(S^3_r(C))$  left orderable for $pq - p - q < r < pq$?
\end{ques}

Without further information of the knot concerned, it is in general very difficult to address Question \ref{CW ques} by elementary means. Therefore, we have specialized to torus knots \footnote{It was shown in \cite{CW decay} that, for the $(x,y)$-torus knot, $r_0 = {xy - 1}$ satisfies the assumption of Question \ref{CW ques}.}, whose fundamental groups are well understood and well behaved, and answered the question for $r$ near $pq$. More specifically, the following theorem is proven.

\begin{thm} \label{grand total}
Let $K$ be the $x,y$-torus knot. Let $p\geq2$ be an integer and $q=pxy-1$. If $r' \in [pq-1,pq]$. Then the fundamental group of $S^3_{r'}(C_{p,q}(K))$ is not left-orderable.
\end{thm}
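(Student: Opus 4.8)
The plan is to identify the homeomorphism type of $M_{r'} := S^3_{r'}(C_{p,q}(K))$ by the standard surgery theory for cable knots, and then to rule out left-orderability of $\pi_1(M_{r'})$ case by case, the point being that $q = pxy - 1$ forces the cable slope $q/p = xy - \tfrac1p$ to sit just below the cabling slope $xy$ of $K = T(x,y)$. Write $E(K)$ for the exterior, and for a slope $r'$ put $d = \Delta(r', pq)$, the distance from $r'$ to the cabling slope $pq$ of the cable knot. Recall Gordon's trichotomy: $M_{r'} = E(K) \cup_T W(r')$, where $W(r')$ is the $(p,q)$-cable space Dehn filled along $r'$; if $d = 0$ (that is, $r' = pq$) the filling is reducible and $M_{pq} \cong L(p,q)\,\#\,S^3_{q/p}(K)$; if $d = 1$ then $W(r')$ is a solid torus and $M_{r'} \cong S^3_{r'/p^2}(K)$; and if $d \ge 2$ then $W(r')$ is a Seifert piece over the disc with two exceptional fibres, of orders $p$ and $d$, with incompressible boundary, so $M_{r'}$ is a graph manifold and $T$ is essential. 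On the interval $[pq-1, pq]$ the slope $pq$ is the only one with $d = 0$; the slopes with $d = 1$ are exactly $r' = pq - \tfrac1n$ for integers $n \ge 1$ (with $n = 1$ the integer slope $pq - 1$); and all remaining $r' \in (pq-1, pq)$ have $d \ge 2$.

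The slope $r' = pq$ is immediate: $\pi_1(M_{pq}) \cong (\mathbb Z/p) * \pi_1(S^3_{q/p}(K))$ has the torsion group $\mathbb Z/p$ as a free factor (recall $p \ge 2$), hence has torsion, hence is not left-orderable. For the slopes with $d = 1$, a short computation from $q = pxy - 1$ gives $r'/p^2 = xy - \tfrac{np+1}{np^2}$ in lowest terms, so $\Delta(r'/p^2, xy) = np+1$ and $M_{r'}$ is the Seifert fibred space over $S^2(x,y,np+1)$ cut out by this torus-knot surgery; moreover $2g(K) - 1 = xy - x - y \le r'/p^2 < xy$. It then remains to show that the fundamental group of such a Seifert space is not left-orderable. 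It sits in a central extension $1 \to \langle h \rangle \to \pi_1(M_{r'}) \to \Delta(x,y,np+1) \to 1$ with $h$ the regular fibre, of infinite order since the Seifert Euler number is non-zero. If the base orbifold $S^2(x,y,np+1)$ is spherical then $\pi_1(M_{r'})$ is a finite spherical space-form group and has torsion; if it is Euclidean then $M_{r'}$ is a Nil-manifold with non-zero Euler number, which one rules out directly; the substantive case is a hyperbolic base orbifold. Here I would argue that a left order dynamically realises an action of $\pi_1(M_{r'})$ on $\mathbb R$ in which $h$ acts by a non-trivial translation, so $\Delta(x,y,np+1)$ acts on the circle $\mathbb R / \langle h \rangle$ realising the Seifert Euler class as a bounded Euler class, and the condition $r'/p^2 \ge 2g(K) - 1$ is exactly what makes the associated rotation-number equations (the Jankins--Neumann--Naimi / Eisenbud--Hirsch--Neumann criterion for a horizontal foliation) inconsistent --- the precise place where the smallness of the gap below $xy$ is used. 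Equivalently, one checks by hand that $M_{r'}$ carries no horizontal foliation.

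For the slopes with $d \ge 2$ we have $\pi_1(M_{r'}) = \pi_1(E(K)) *_{\mathbb Z^2} \pi_1(W(r'))$, the amalgamating $\mathbb Z^2 = \pi_1(T)$ being glued so that the Seifert fibre of the cable space has class $\alpha_{q/p}$ on $\partial E(K)$. Suppose $\pi_1(M_{r'})$ admits a left order. It restricts to left orders on the two vertex groups agreeing on $\mathbb Z^2$, hence induces one left order on $\mathbb Z^2$ that is ``detected'' from both sides. On the $W(r')$ side the central fibre $\alpha_{q/p}$ forces every detected order on $\mathbb Z^2$ to lie on the side of the fibre slope $q/p = xy - \tfrac1p$ away from $xy-1$; on the $E(K)$ side, since $q/p > xy - 1 = r_0$, Clay--Watson's rigidity of $r_0$ for $T(x,y)$ forces every order that $\pi_1(E(K))$ puts on $\mathbb Z^2$ to lie weakly below the slope $xy - 1$; as $xy - 1 < xy - \tfrac1p$, these two families are disjoint --- a contradiction.

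The cases $d \ge 1$ are where the real work lies, and I expect them to be the main obstacle: no torsion is available, so non-left-orderability must be squeezed out of the ``largeness'' of the Seifert structure --- concretely, out of the fact that $q/p$ lies in the short interval $(xy-1,\, xy)$, which is precisely why one takes $q = pxy - 1$. Turning this into self-contained estimates (a bookkeeping of Seifert invariants and rotation numbers in the $d = 1$ case, and a detected-slope computation for the filled cable space in the $d \ge 2$ case), rather than quoting the L-space classification of Seifert fibred and graph manifolds, is the part of the argument I would be most careful about.
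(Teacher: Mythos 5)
Your route is genuinely different from the paper's: you pass to Gordon's decomposition of surgeries on cable knots and then appeal to the geometric machinery for Seifert fibred and graph manifolds, whereas the paper works entirely inside an explicit presentation of $\pi_1(C_{p,q}(T_{x,y}))$. Your case analysis is correctly organized --- the trichotomy by $d=\Delta(r',pq)$ is right, $M_{pq}\cong L(p,q)\,\#\,S^3_{q/p}(K)$ disposes of $r'=pq$ by torsion, and the distance-one slopes in the interval are exactly $pq-\frac{1}{n}$ --- but as a proof the proposal has real gaps precisely where you flag them. In the $d=1$ case, passing from a left order to an action on $\mathbb{R}$ in which the fibre $h$ translates without fixed points is the Boyer--Rolfsen--Wiest realization theorem together with a non-trivial argument excluding orders in which $h$ acts with fixed points, and the inconsistency of the rotation-number equations for $r'/p^2\ge 2g(K)-1$ is the Eisenbud--Hirsch--Neumann/Jankins--Neumann computation, which you do not carry out. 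In the $d\ge2$ case, the step ``a left order on the amalgam induces an order on $\mathbb{Z}^2$ detected from both sides, and the two detected families are disjoint'' is essentially the gluing theorem of Boyer--Clay plus two slope-detection computations (for the filled cable space and for the torus-knot exterior, tracked through the gluing map); none of these is supplied, and the translation of Clay--Watson's decay condition into ``every detected slope lies weakly below $xy-1$'' also needs an argument. What you have is an accurate map of the known non-elementary proof --- the one the paper itself cites via \cite{BC} and \cite{HRRW} and explicitly sets out to avoid --- rather than a self-contained argument.

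For contrast, the paper's proof is short and purely algebraic. With $q=pxy-1$ one may take $u=xy$, $v=1$, $0<i<x$ and $j<0$ in the standard presentation; Lemma \ref{same sign} then gives $t^p=a^{xp-i}b^{-j}$, so in any left order $t$, $a$ and $b$ share a sign. For $r'=pq-\frac{1}{\beta}$ the surgery relation collapses to $t^{p\beta+1}=a^x=b^y$, and dividing the two identities yields $t^{-[p(\beta-1)+1]}=a^{(p-1)x-i}b^{-j}$, a sign contradiction since $p\ge2$ (Theorem \ref{discrete}). The remaining slopes in $(pq-1,pq)$ are handled by the Cramer's-rule interpolation of Proposition \ref{sim CW thm}, which shows $\mu_C^{pq-1}\lambda_C$ would have to be trivial, contradicting Corollary \ref{cor of discrete case}. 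If you want a complete write-up, either commit to filling in the EHN and detection computations, or switch to this algebraic route.
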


The choice of the assumptions will be explained as the proof unfolds.

This result is in fact special cases of a general phenomenon, as $S^3_{r'}(C_{p,q}(K))$ is a graph manifold L-space, which was proven to be not left-orderable in \cite{BC} and \cite{HRRW}. This general fact, however, is proven indirectly by means of foliation. Our interest here, on the contrary, is in proving the result by elementary means.

The paper is organized as follows. Section \ref{Background} contains the background materials on left orderable groups, torus knots and cable knot, and Dehn surgery. The general case is also reviewed in this section.
Section \ref{The main theorem} presents the proof of Theorem \ref{grand total}, which consists of two parts. The first part proves Theorem \ref{grand total} for the special case of an infinite sequence in $[pq-1,pq]$ completely by elementary manipulation of the group relations, 
and the second part completes the proof with an argument similar to Clay and Watson's in \cite{CW decay} and \cite{CW obstruct}.

The authors would like to thank their supervisor Zhongtao Wu for his continued guidance and support.

\section{Background} \label{Background}

\subsection{Left order}

\begin{defin}
A left order on a group $G$ is a strict total ordering $<$ of elements of $G$ such that $ga < gb$ whenever $a<b$. The positive cone of the left order $<$ is $P=\{g \in G|g>1\}$. Two elements are said to have the same sign iff either they are both in $P$, they are both in $P^{-1}=\{g^{-1} \in G|g>1\} = \{ h \in G| h^{-1} > 1\}$, or they are both $1$. A nontrivial group is said to be left-orderable if it has a left order. 
\end{defin}

\begin{prop} \label{LO}.
	\begin{enumerate}
		\item $g < h$ iff $g^{-1} h \in P$
		\item $P$ is closed under multiplication.
		\item \label{part} $G = P \sqcup \{1 \} \sqcup P^{-1}$.
		\item If $g$ and $h$ commute, then for every positive integer $n$, $g < h$ iff $g^n<h^n$. In particular, for every positive integer $n$, $g > 1$ iff $g^n > 1$.
	\end{enumerate}
\end{prop}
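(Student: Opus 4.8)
The plan is to establish the four parts in sequence, each leaning on its predecessors, with the single recurring engine being \emph{left-invariance}: for any $a \in G$ the map $x \mapsto ax$ is an order-automorphism, since $b < c \iff ab < ac$ (the forward implication is the defining property of a left order, and the converse follows by left-multiplying by $a^{-1}$). Every computation below is a disciplined application of this fact together with the trichotomy built into a strict total order.

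For part 1 I would left-multiply the inequality $g < h$ by $g^{-1}$ to obtain $1 < g^{-1}h$, which is exactly the assertion $g^{-1}h \in P$; the converse is the same line read backwards, left-multiplying $1 < g^{-1}h$ by $g$. For part 2, given $g, h \in P$, left-multiplying $1 < h$ by $g$ yields $g < gh$, and combining this with $1 < g$ by transitivity gives $1 < gh$, so $gh \in P$. Part 3 is essentially trichotomy: the strict total order partitions $G$ according to whether $g < 1$, $g = 1$, or $g > 1$, and these cases are mutually exclusive and exhaustive; it remains only to identify $\{g : g < 1\}$ with $P^{-1}$, which follows from part 1 (or directly by left-multiplying $g < 1$ by $g^{-1}$), since $g < 1 \iff 1 < g^{-1} \iff g^{-1} \in P \iff g \in P^{-1}$.

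The substance of the proposition is part 4. For the forward direction I would first translate $g^n < h^n$ into the membership $g^{-n}h^n \in P$ by part 1, and then exploit the hypothesis through the identity $g^{-n}h^n = (g^{-1}h)^n$, which holds \emph{precisely because} $g$ and $h$ commute; since $g < h$ gives $g^{-1}h \in P$ by part 1, closure under products (part 2, applied inductively) forces $(g^{-1}h)^n \in P$, completing the implication. The reverse direction I would handle by contraposition using trichotomy (part 3): if $g \not< h$ then either $g = h$, whence $g^n = h^n$, or $h < g$, whence $h^n < g^n$ by the forward direction, and either alternative contradicts $g^n < h^n$. The ``in particular'' clause is then the special case $g = 1$, which commutes with every $h$, so that $1 < h \iff 1 < h^n$.

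The only place where any hypothesis does real work—and hence the main obstacle to watch—is the commutativity identity $g^{-n}h^n = (g^{-1}h)^n$ in part 4; everything else is formal bookkeeping with left-invariance and trichotomy. I would flag that this identity genuinely fails for non-commuting $g, h$, explaining why the hypothesis cannot be dropped, and that the reverse direction of part 4 is exactly where trichotomy (and thus part 3) becomes indispensable.
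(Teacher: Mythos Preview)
Your proposal is correct and follows essentially the same approach as the paper: parts 1--2 are identical, part 3 is the same trichotomy argument (you invoke totality of the order directly, while the paper verifies disjointness of $P$ and $P^{-1}$ via closure, a cosmetic difference), and part 4 uses the same commutativity identity $(g^{-1}h)^n = g^{-n}h^n$ together with closure and a contrapositive for the reverse implication. The only organizational difference is that the paper reduces the general statement of part 4 to the special case $g=1$ first and then proves that, whereas you prove the general statement directly and read off the special case; the content is the same.
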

\begin{proof}
The first claim follows by multiplying $g$ or $g^{-1}$ on the left. For the second claim, $gh>g \cdot 1 = g >1$ whenever $g, h \in P$. For the third claim, it is clear that $P$ and $\{ 1 \}$ are disjoint, and hence so are $P^{-1}$ and $\{ 1 \}$; that $P$ and $P^{-1}$ are disjoint follows from the second claim and that $P$ and $\{ 1 \}$ are disjoint. If $g < 1$, then $g^{-1} = g^{-1} \cdot 1 > g^{-1} g = 1$, hence $g \in P^{-1}$; this proves the third claim. Finally,  for the last claim, if $g$ and $h$ commute and $g < h$, then $(g^{-1} h)^n = g^{-n} h^n$, hence by the first claim, it suffices to prove the particular case. The 'only if' part follows from the closure of $P$. For the 'if' part, if $g \le 1$, then $g^{-1} \ge 1$, and hence by closure of $P$, $g^{-n} \ge 1$, and the result then follows.
\end{proof}

\subsection{Torus knot and Cable knot}

The following two propositions are proven in \cite{CW obstruct} and \cite{CW decay} respectively.

\begin{prop} \label{torus}
The fundamental group of a torus knot $T_{x,y}$ is $\langle a,b|a^x=b^y \rangle$ with canonical basis $\{\mu,\lambda\}$, where $\mu = b^ja^i$, $\lambda = \mu ^{-xy}a^x$ and $xj+yi=1$.
\end{prop}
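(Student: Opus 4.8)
The plan is to reconstruct this as a standard Seifert--van Kampen computation on the genus-one Heegaard splitting of $S^3$, supplemented by homological bookkeeping to pin down the peripheral elements. First I would write $S^3 = V_1 \cup_T V_2$ as the union of two solid tori glued along their common boundary torus $T$, and realize $T_{x,y}$ as a simple closed curve on $T$ that runs $x$ times in the longitudinal direction of $V_1$ and $y$ times in that of $V_2$. Letting $N = N(T_{x,y})$ be a tubular neighborhood chosen inside a bicollar $T \times [-1,1]$ of $T$, the complement $S^3 \setminus \mathrm{int}\,N$ decomposes as $X_1 \cup_A X_2$, where $X_i$ is $V_i$ together with half of the bicollar minus $N$, and $A = X_1 \cap X_2$ is the annulus $(T \setminus \mathrm{int}\,N) \times \{0\}$.

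Next I would observe that each $X_i$ deformation retracts onto the core circle of $V_i$, so $\pi_1(X_1) = \langle a \rangle$ and $\pi_1(X_2) = \langle b \rangle$ with $a, b$ the core curves, while $\pi_1(A) \cong \mathbb{Z}$ is generated by the core of $A$, a curve on $T$ parallel to $T_{x,y}$. After choosing compatible orientations, this core is homotopic to $a^x$ in $X_1$ and to $b^y$ in $X_2$. Since $A$, $X_1$, $X_2$ are path-connected, Seifert--van Kampen then yields $\pi_1(S^3 \setminus \mathrm{int}\,N) = \langle a \rangle *_{\mathbb{Z}} \langle b \rangle = \langle a, b \mid a^x = b^y \rangle$, which is the presentation asserted.

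For the peripheral structure, I would abelianize: the relation becomes $x[a] = y[b]$ in $H_1 \cong \mathbb{Z}$, and since $\gcd(x,y) = 1$ one may take $[a] \mapsto y$, $[b] \mapsto x$ as the identification with a generator. Fixing integers $i,j$ with $xj + yi = 1$ and setting $\mu = b^j a^i$, one gets $[\mu] = jx + iy = 1$, so $\mu$ generates $H_1$; a direct trace on $\partial N$ of a loop linking $T_{x,y}$ once, using that $T_{x,y}$ is an $(x,y)$-curve on $T$, then identifies $\mu$ with the meridian. For the longitude, the core of $A$ is parallel to $T_{x,y}$ and represents the central element $z = a^x = b^y$; the $0$-framed longitude differs from this torus-parallel pushoff by the self-linking correction, which for $T_{x,y}$ equals $xy$, giving $\lambda = z\mu^{-xy} = \mu^{-xy} a^x$. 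Finally one checks $[\lambda] = -xy \cdot 1 + xy = 0$ in $H_1$ and that $\{\mu, \lambda\}$ spans the peripheral $\mathbb{Z}^2$, confirming it is the canonical basis.

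The hard part will not be the van Kampen presentation, which is routine, but the orientation-sensitive identification of $\mu$ and especially the framing computation producing the exponent $-xy$ in $\lambda$: one must fix sign conventions once and for all and compute the linking number of $T_{x,y}$ with its torus-parallel pushoff, which is exactly $xy$. Everything else is homological bookkeeping. Since this is a classical result, an acceptable alternative would simply be to cite \cite{CW obstruct}, but the argument above is the elementary route in keeping with the spirit of the paper.
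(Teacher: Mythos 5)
The paper does not actually prove this proposition: it states that Propositions \ref{torus} and \ref{cable} are quoted from \cite{CW obstruct} and \cite{CW decay}, so there is no internal argument to compare against. Your reconstruction is the standard classical proof (Seifert--van Kampen on the genus-one Heegaard splitting, then homological bookkeeping plus the framing correction $xy$ for the longitude), and it is correct in outline; it is essentially the argument one finds in the cited sources and in standard references. The one step you should not dismiss as pure ``homological bookkeeping'' is the identification of $\mu = b^j a^i$ as the meridian: the computation $[\mu] = xj + yi = 1$ only shows $\mu$ generates $H_1$, which many non-peripheral elements also do, so the ``direct trace on $\partial N$'' you mention is genuinely needed to see that this particular word is freely homotopic into $\partial N$ and bounds a disk in $N$. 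Likewise the exponent $-xy$ requires the linking-number computation $\mathrm{lk}(T_{x,y}, T_{x,y}') = xy$ for the torus-parallel pushoff, which you correctly isolate as the only delicate point. With those two details filled in, your proof is complete and is the elementary route the paper implicitly relies on.
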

\begin{prop} \label{cable}
Let $k$ be a knot with canonical basis $\{\mu,\lambda\}$. The fundamental group of the $p,q$-cable knot $C_{p,q}(K)$ on $K$ is $\pi_1(K)\ast_{\mu^q\lambda^p=t^p}\mathbb{Z}$, with canonical basis $\{\mu_C,\lambda_C\}$, where $\mu_C = \mu^u\lambda^vt^{-v}$ and $\lambda_C = \mu_C ^{-pq}t^p$, and $pu-qv=1$.
\end{prop}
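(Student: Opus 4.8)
The plan is to split $[pq-1,pq]$ according to the distance of the surgery slope from the cabling slope $pq$, handling the distance-one slopes by collapsing the surgered group to a torus-knot surgery group, and the remaining slopes by an order-descent argument onto the companion in the style of \cite{CW decay}. Throughout write $z=a^{x}=b^{y}$ for the central element of $\pi_{1}(K)$; by Proposition \ref{torus}, $\lambda=\mu^{-xy}z$, so $\{\mu,z\}$ is a second basis of the peripheral subgroup $A=\langle\mu,\lambda\rangle\cong\mathbb Z^{2}$ and any two of $\mu,\lambda,z$ commute. The first task is to normalise the surgery relation. Writing $r'=m/n$ in lowest terms, $r'\in[pq-1,pq]$ forces $m=npq-j$ with $0\le j\le n$ and $\gcd(j,n)=1$; since $\mu_{C}$ and $\lambda_{C}$ commute and $\lambda_{C}=\mu_{C}^{-pq}t^{p}$ by Proposition \ref{cable}, the slope element $\mu_{C}^{m}\lambda_{C}^{n}$ collapses to $\mu_{C}^{-j}t^{pn}$, so the surgery relation is simply $\mu_{C}^{\,j}=t^{pn}$. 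The endpoints are $(j,n)=(0,1)$, i.e.\ $r'=pq$, and $(j,n)=(1,1)$, i.e.\ $r'=pq-1$. The hypothesis $q=pxy-1$ now earns its keep: it makes $\mu^{q}\lambda^{p}=\mu^{q-pxy}z^{p}=\mu^{-1}z^{p}$, so the amalgamating relation of Proposition \ref{cable} reads $t^{p}=\mu^{-1}z^{p}$.

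Part one treats the infinite family $\{pq\}\cup\{pq-\tfrac1n:n\ge1\}$, that is $j\in\{0,1\}$. For $r'=pq$ the relation is $t^{p}=1$; as $t^{p}=\mu^{q}\lambda^{p}$ is the amalgamating element, killing it converts the amalgamated product of Proposition \ref{cable} into the free product $\pi_{1}(S^{3}_{q/p}(K))\ast(\mathbb Z/p)$, and the factor $\mathbb Z/p$ with $p\ge2$ has torsion, so the group is not left-orderable by the power rule and partition in Proposition \ref{LO}(\ref{part}). For $r'=pq-\tfrac1n$, i.e.\ $j=1$, the relation is $\mu_{C}=t^{pn}$, whence $\mu^{u}\lambda^{v}=t^{pn+v}\in A$; since $pu-qv=1$ and $q\equiv-1\pmod p$ give $v\equiv1\pmod p$, we have $\gcd(p,pn+v)=1$, and combined with $t^{p}\in A$ this yields $t\in A\subseteq\pi_{1}(K)$. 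Thus $t$ is redundant, and eliminating it leaves $\pi_{1}(K)$ with the single peripheral relation $\mu^{pqn-1}\lambda^{p^{2}n}=1$; that is, $\pi_{1}(S^{3}_{r'}(C_{p,q}(K)))\cong\pi_{1}\!\big(S^{3}_{(pqn-1)/(p^{2}n)}(T_{x,y})\big)$. This is the Seifert fibred space over $S^{2}(x,y,pn+1)$, and its non-left-orderability is exactly the torus-knot surgery obstruction of \cite{CW obstruct}: pass to the central fibre $z$ and use its centrality together with the power rule of Proposition \ref{LO} to show that the rotation data forced on lifts of the torsion elements of the base triangle group cannot be sign-coherent. (Geometrically these are the distance-one surgeries, for which the cable space fills to a solid torus.)

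Part two treats the remaining slopes $2\le j\le n$, which do not collapse and give genuine graph manifolds. Here I would imitate the decay argument of \cite{CW decay}. Because the cabling torus is incompressible, $\pi_{1}(T_{x,y})$ injects into $G=\pi_{1}(S^{3}_{r'}(C_{p,q}(K)))$, so a hypothetical left order on $G$ restricts to one on the companion group. In that induced order the relation $t^{p}=\mu^{q}\lambda^{p}=\alpha_{q/p}$, together with $q/p=xy-\tfrac1p>xy-1=r_{0}$, pins the sign of $\alpha_{q/p}$ via the sign-coherence property of torus knots recalled in the footnote, while $\mu_{C}^{\,j}=t^{pn}$ and $\alpha_{r'}=1$ force $\mu_{C}$ and $\lambda_{C}$ to have opposite signs in the now-cyclic image of the peripheral subgroup. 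Transporting these signs back through $\mu_{C}=\mu^{u}\lambda^{v}t^{-v}$ and $\lambda_{C}=\mu_{C}^{-pq}t^{p}$ should exhibit some $\alpha_{r}$ with $r>r_{0}$ whose sign violates the coherence already forced at $q/p$, contradicting left-orderability.

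The main obstacle is Part two: the elements $\mu_{C},\lambda_{C}$ are built from $\mu,\lambda$ and $t$, which do not commute in the cable group, so the sign bookkeeping cannot be read off additively and must be organised carefully — essentially reproving, for torus companions and slopes near $pq$, the positive answer to Question \ref{CW ques}. A secondary difficulty is the Seifert computation in Part one: when the base $S^{2}(x,y,pn+1)$ is Euclidean (only $(x,y,pn+1)=(2,3,6)$, i.e.\ the trefoil with $pn=5$) the group is virtually nilpotent with torsion in its base orbifold group, so non-left-orderability genuinely depends on the sign of the Euler number and cannot be deduced from torsion alone; this is exactly where Proposition \ref{LO}(4) does the real work.
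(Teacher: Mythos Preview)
Your proposal does not address the stated proposition at all. Proposition~\ref{cable} is a structural statement about the fundamental group of a cable-knot complement (essentially a Seifert--van Kampen computation identifying the meridian and longitude of the cable); the paper does not prove it but simply cites it from \cite{CW decay}. What you have written is instead a proof sketch for Theorem~\ref{grand total}, the main non-left-orderability result. Nothing in your write-up establishes the amalgamated-product presentation or the formulas for $\mu_C$ and $\lambda_C$; on the contrary, you \emph{invoke} Proposition~\ref{cable} repeatedly as an input.

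Even read as an attempt at Theorem~\ref{grand total}, your approach is both different from the paper's and incomplete. The paper first proves the discrete family $r'=pq-1/\beta$ by a direct sign contradiction among $a,b,t$ (Lemma~\ref{same sign} and Theorem~\ref{discrete}), and then fills in all of $[pq-1,pq]$ with a short convexity/Cramer's-rule argument on peripheral elements (Proposition~\ref{sim CW thm}): one checks that $\mu_C^{pq-1}\lambda_C$ and $\mu_C^{pq}\lambda_C=t^p$ share the sign of $t$, so no $\mu_C^{m'}\lambda_C^{n'}$ with $m'/n'\in[pq-1,pq]$ can be trivial. Your Part one instead tries to collapse the $j=1$ surgeries to torus-knot surgeries and quote the Seifert-fibred obstruction; this is a longer route to what the paper does in a few lines of sign-chasing. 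Your Part two is, by your own description, only a plan: the ``sign bookkeeping'' you flag as the main obstacle is exactly what the paper's Cramer's-rule step sidesteps, and you have not carried it out.
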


\begin{rmk}
Note that in the two propositions above, $i, j, u$ and $v$ are not uniquely determined, yet different choices give the same meridians and longitudes. Indeed, for the case of a torus knot, since $a^x=b^y$ , $\mu = b^ja^i=b^{j-ny}a^{i+nx}$; for the case of a cable knot, since $\mu^q\lambda^p=t^p$ and the peripheral elements $\mu,\lambda$ commute, $\mu_C = \mu^u\lambda^vt^{-v} = \mu^u\lambda^vt^{np}t^{-v-np}=\mu^{u+nq}\lambda^{v+np}t^{-(v+np)}$.
\end{rmk}

One challenge of studying fundamental group is that the group is typically nonabelian. However, in the case of torus knots and cable knots, many elements commute with each other, as summarised by the following proposition.

\begin{prop} \label{commute}
Using the notation above, the following pairs and triples commute in every quotient of the above groups .
	\begin{enumerate}
		\item $a^x$ and $b$
		\item $a$ and $b^y$
		\item $\mu$, $\lambda$ and $t^p$
		\item $\mu_C$ and $\lambda_C$ and $t^p$
	\end{enumerate}
\end{prop}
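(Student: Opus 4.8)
The plan is to reduce all four assertions to verifying the corresponding commutation relations \emph{inside} the two groups themselves, because commuting pairs and triples automatically descend to quotients: if $N$ is a normal subgroup of a group $G$ and $gh = hg$ in $G$, then in $G/N$ one has $(gN)(hN) = ghN = hgN = (hN)(gN)$. Thus it suffices to work in $\Pi := \langle a,b \mid a^x = b^y \rangle$ for items (1)--(2) and in $\Gamma := \pi_1(K) \ast_{\mu^q\lambda^p = t^p} \mathbb{Z}$ for items (3)--(4), using only the defining relations together with the elementary fact that powers of a fixed element commute.

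For (1) I would use the defining relation directly: $a^x b = b^y b = b\,b^y = b a^x$. For (2), similarly $a b^y = a a^x = a^x a = b^y a$. For (3), I would recall (as noted in the Remark following Proposition \ref{cable}) that the peripheral pair $\mu,\lambda$ of the knot $K$ commutes, and then exploit the amalgamation relation in the form $t^p = \mu^q\lambda^p$ to compute $t^p \mu = \mu^q\lambda^p\mu = \mu^{q+1}\lambda^p = \mu\,\mu^q\lambda^p = \mu t^p$; the same manipulation (pushing $\lambda$ past $\mu^q\lambda^p$ using $\mu\lambda = \lambda\mu$) shows $t^p$ commutes with $\lambda$ as well, and together with $\mu\lambda = \lambda\mu$ this gives that $\mu$, $\lambda$, $t^p$ pairwise commute.

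Item (4) I would then obtain by bootstrapping from (3). By (3) the element $t^p$ commutes with $\mu$ and with $\lambda$, and it trivially commutes with every power of $t$; hence $t^p$ commutes with the word $\mu_C = \mu^u\lambda^v t^{-v}$. Finally $\lambda_C = \mu_C^{-pq} t^p$ is a product of powers of the two elements $\mu_C$ and $t^p$, which have just been shown to commute, so $\lambda_C$ commutes with both $\mu_C$ and $t^p$, giving the last triple.

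The computations are entirely routine; the only point requiring any care is the logical ordering — item (4) is not read off the presentation of $\Gamma$ directly but deduced from (3), and (3) in turn rests on the single external input that the meridian and longitude of $K$ commute. (For the torus knots relevant to Theorem \ref{grand total}, even that input can be recovered from (1): since $\lambda = \mu^{-xy}a^x$ and $a^x$ commutes with $a^i$ and, by (1), with $b^j$, the element $a^x$ commutes with $\mu = b^j a^i$, whence $\mu$ and $\lambda$ commute.)
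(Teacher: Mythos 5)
Your proof is correct and takes essentially the same approach as the paper: items (1)--(2) follow directly from $a^x=b^y$, item (3) from the commutativity of the peripheral pair $\mu,\lambda$ together with $t^p=\mu^q\lambda^p$, and item (4) by the analogous bootstrapping. You actually supply slightly more detail than the paper (which dismisses (4) as ``similar'') and correctly flag the descent-to-quotients point, but the underlying argument is the same.
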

\begin{proof}
The first two claims follows directly from that relation $a^x = b^y$.For the third claim, that $\mu$ and $\lambda$ commute follows from the fact that they are peripheral elements; recalling the group relation $\mu^q\lambda^p=t^p$, since $t^p$ is a product of $\mu$ and $\lambda$, which commute, $t^p$ commutes with them. The proof of the last claim is similar.
\end{proof}

\subsection{Dehn surgery}
The following proposition is well known.

\begin{prop}
The fundamental group of the resultant space $S^3_{m/n}(K)$ of an $\frac{m}{n}$-surgery on a knot $K$ is $\pi_1(K)/\langle \mu^m \lambda^n \rangle$, where $\{\mu,\lambda\}$ is the canonical basis.
\end{prop}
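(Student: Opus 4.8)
The plan is to compute the fundamental group directly from the surgery construction via the Seifert--van Kampen theorem. Recall that $S^3_{m/n}(K)$ is obtained by deleting an open tubular neighborhood $N(K)$ of $K$ from $S^3$, leaving the knot exterior $E(K) = S^3 \setminus N(K)$ whose fundamental group is $\pi_1(K)$, and then gluing in a solid torus $V$ along the boundary torus $T = \partial E(K)$ so that the meridian of $V$ (the curve bounding a meridian disk of $V$) is identified with the slope $\mu^m\lambda^n$ on $T$. I would begin by arranging an open cover: thicken $E(K)$ and $V$ slightly so that each is open, the two overlap in a collar $T \times (-\epsilon,\epsilon)$ that deformation retracts onto $T$, and all three pieces are path-connected. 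Then $\pi_1(T) = \langle \mu,\lambda\rangle \cong \mathbb{Z}^2$ and $\pi_1(V) \cong \mathbb{Z}$, the latter generated by the core of $V$.

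With this cover in place, Seifert--van Kampen yields $\pi_1(S^3_{m/n}(K)) \cong \pi_1(K) *_{\pi_1(T)} \pi_1(V)$, the pushout that takes the free product $\pi_1(K) * \pi_1(V)$ and imposes, for each generator of $\pi_1(T)$, the equation between its two inclusion-induced images. The next step is to identify these maps explicitly. The inclusion $T \hookrightarrow E(K)$ sends $\mu$ and $\lambda$ to the peripheral elements of $\pi_1(K)$ of the same name. For the inclusion $T \hookrightarrow V$, the key observation is that the meridian $\mu^m\lambda^n$ of $V$ bounds a disk in $V$ and hence maps to the identity, while a curve dual to it maps to a generator of $\pi_1(V)$.

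To make the dual curve precise I would use that $\gcd(m,n)=1$, so there exist integers $a,b$ with $mb-na=\pm 1$; then $\{\mu^m\lambda^n,\ \mu^a\lambda^b\}$ is a basis of $\pi_1(T) \cong \mathbb{Z}^2$, and under $T \hookrightarrow V$ the second basis element maps to a generator $c$ of $\pi_1(V)$, up to inverse. Feeding this into the pushout, the relation coming from $\mu^a\lambda^b$ expresses the free generator $c$ as a word in $\pi_1(K)$, so a Tietze transformation eliminates $c$ and the factor $\pi_1(V)$ altogether. The only surviving relation is the one from the meridian, namely $\mu^m\lambda^n = 1$. Hence $\pi_1(S^3_{m/n}(K)) \cong \pi_1(K)/\langle\!\langle \mu^m\lambda^n\rangle\!\rangle$, the quotient by the normal closure of $\mu^m\lambda^n$, which is exactly the proposition once one reads the notation $\langle \mu^m\lambda^n\rangle$ as the normal subgroup generated by $\mu^m\lambda^n$.

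The main obstacle I anticipate is bookkeeping with the gluing conventions rather than any deep difficulty: one must verify carefully that ``performing $m/n$-surgery'' is defined so that the meridian of the attached solid torus is precisely $\mu^m\lambda^n$ (and not, say, an inverse or a swap), since this slope convention is exactly what fixes which element becomes trivial. A secondary point worth stating cleanly is that the companion curve $\mu^a\lambda^b$ is genuinely sent to a generator of $\pi_1(V)$, which is what licenses the elimination of the free $\mathbb{Z}$ factor; this in turn relies on $(m,n)$ being coprime, so that $\mu^m\lambda^n$ is primitive and extends to a basis of $\pi_1(T)$.
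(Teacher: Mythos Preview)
Your argument is correct and is exactly the standard Seifert--van Kampen computation for Dehn filling. The paper itself offers no proof: it simply records the proposition as ``well known,'' so there is nothing to compare against beyond noting that your sketch supplies precisely the routine justification the authors omitted. Your caveat that $\langle \mu^m\lambda^n\rangle$ must be read as the normal closure is the only point worth flagging, and you have already flagged it.
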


\subsection{The general phenomenon}  
As mentioned in the introduction, Theorem \ref{grand total} is a special case of a general phenomenon.

Let $C$ be the $(p,q)$-cable of the $(x,y)$-torus knot. Then it has genus $g(C) = \frac{1}{2} [(p-1)(q-1) + p(x-1)(y-1)]$.   

In \cite[Theorem 1.1]{BC} and \cite[Theorem 2]{HRRW}, it was proven that a graph manifold L-space has a non-left-orderable fundamental group, so it suffices to consider the surgeries that give rise to graph manifold L-spaces.

It was first established in \cite[Theorem 7.5]{G} that a surgery on $C$ always gives a graph manifold.
By \cite[Theorem 1.10]{H}, $C$ is an L-space knot,
hence $S_r^3(C)$ is an L-space iff $r \geq 2g(C) - 1$.  
Therefore, $S_r^3(C)$ has a non-left-orderable fundamental group iff $r \geq 2g(C) - 1 = (p-1)(q-1) + p(x-1)(y-1) - 1$. Plugging in $q = pxy - 1$ and rearranging shows $(p-1)(q-1) + p(x-1)(y-1) - 1 = (pq - 1) - [p(x + y) - 2]$. Since $p(x + y) - 2 > 0$, this implies Theorem \ref{grand total}.

\section{Proof of Theorem \ref{grand total}} \label {The main theorem}

Combining propositions from the previous section, the knot group $G$ of $C_{p,q}(T_{x,y})$ is generated by $a$, $b$ and $t$, with the following relations.

\begin{equation} \label{ab}
	a^x=b^y \end{equation}
\begin{equation} \label{mu}
	\mu = b^ja^i \end{equation}
\begin{equation} \label{lambda}
	\lambda = \mu ^{-xy}a^x \end{equation}
\begin{equation} \label{t}
	\mu^q\lambda^p=t^p \end{equation}

where
\begin{equation} \label{xy}
	xj+yi=1 \end{equation}
\begin{equation} \label{pq}
	pu-qv=1 \end{equation}

The meridian $\mu_C$ and the longitude $\lambda_C$ are
\begin{equation} \label{mu_C}
	\mu_C = \mu^u \lambda^v t^{-v} \end{equation}
\begin{equation} \label{lambda_C}
	\lambda_C = \mu_C ^{-pq}t^p \end{equation}

The fundamental group $H$ of $S^3_{m/n}(C_{p,q}(T_{x,y}))$ is then the quotient of $G$ by the relation \begin{equation} \label{surgery} \mu_C^m\lambda_C^n=1 \end{equation}.

The following lemma will be used later.
\begin{lemma} \label{same sign}
Suppose $q = pxy - 1$. Then in every quotient $H$ of $G$, $t^p = a^{xp - i} b^{-j}$. Hence if $H$ is left-orderable, $t$, $a$ and $b$ have the same sign.
\end{lemma}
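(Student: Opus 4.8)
The plan is to compute $t^p$ explicitly in terms of $a$ and $b$ using the defining relations, and then deduce the sign statement from the commutativity facts in Proposition \ref{commute} together with Proposition \ref{LO}(\ref{part}) and part (4).

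First I would assemble the chain of substitutions. From \eqref{t} we have $t^p = \mu^q \lambda^p$. Using \eqref{lambda}, $\lambda^p = \mu^{-xyp} a^{xp}$ (legitimate because $\mu$, $\lambda$ and $a^x$ all commute — $\lambda = \mu^{-xy}a^x$ shows $a^x$ is a product of the commuting peripheral elements, and Proposition \ref{commute}(1) gives $a^x$ central enough for this), so $t^p = \mu^{q - xyp} a^{xp}$. Now the hypothesis $q = pxy - 1$ is exactly what makes the exponent of $\mu$ collapse: $q - xyp = -1$, giving $t^p = \mu^{-1} a^{xp}$. Finally substitute \eqref{mu}, i.e. $\mu = b^j a^i$, so $\mu^{-1} = a^{-i} b^{-j}$, and hence $t^p = a^{-i} b^{-j} a^{xp}$. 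Since $a^x = b^y$ commutes with $b$ (Proposition \ref{commute}(1)), and more to the point $a$ commutes with $a$, I can move the $a^{xp}$ factor: $t^p = a^{xp - i} b^{-j}$ provided $b^{-j}$ and $a^{xp}$ commute, which again follows because $a^{xp} = (a^x)^p = (b^y)^p$ is a power of $b$. This yields the first assertion. I should double-check the exponent bookkeeping and the order in which $a$- and $b$-powers are written, since that is the only place an error could creep in, but it is a routine check rather than a real obstacle.

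For the sign statement, suppose $H$ is left-orderable with positive cone $P$. The relation $t^p = a^{xp-i} b^{-j}$ and the relation $a^x = b^y$ let me express everything as powers of two commuting elements; concretely, $a^{xp}$ and $b^{j}$ commute (both are powers of $b$, since $a^{xp}=b^{yp}$... wait, one must be a little careful: $a^{xp-i}$ need not be a power of $b$). The cleaner route is: by Proposition \ref{LO}(4), $t > 1 \iff t^p > 1$, and similarly for $a$ and $b$ after passing to suitable powers; then argue that if, say, $a \le 1$ and $b \le 1$ then $a^{xp-i}b^{-j}$ — with $xp - i > 0$ (this needs $i$ bounded appropriately, which is where the freedom in choosing $i,j$ from the Remark after Proposition \ref{cable} can be used to normalize) and $j$ of a definite sign — is $\ge 1$, forcing $t^p \ge 1$ and $t \ge 1$, and run through the sign cases to conclude all three share a sign. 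The honest statement is that $t$, $a$, $b$ all lie in $P \cup \{1\}$ or all lie in $P^{-1} \cup \{1\}$, which is what ``same sign'' means here.

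The main obstacle I anticipate is not the algebraic identity, which is short, but making the sign deduction fully rigorous: one needs $xp - i$ and $-j$ (or their product structure) to have controlled signs so that a product of powers of same-signed commuting elements is again of that sign, and this requires either choosing the representatives $i, j$ judiciously using the non-uniqueness noted in the Remark, or splitting into cases on the signs of $a$ and $b$ separately and using Proposition \ref{LO}(2) and (4) in each. I would handle this by first reducing, via $a^x = b^y$, to writing $t^p$ as a product of a power of $a$ and a power of $b$ with exponents of prescribed sign, and then invoking closure of the positive cone under multiplication and inversion to pin down that $a$, $b$, $t$ cannot have mixed signs.
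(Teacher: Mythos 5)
Your proposal is correct and follows essentially the same route as the paper: the same chain of substitutions $t^p = \mu^q\lambda^p = \mu^{-1}a^{xp} = a^{xp-i}b^{-j}$ exploiting $q = pxy - 1$, followed by normalizing $0 < i < x$ (hence $j < 0$) so that the exponents $xp - i$ and $-j$ are both positive and closure of the positive cone yields the sign conclusion. (The only slip is a reversed inequality in your case sketch --- if $a \le 1$ and $b \le 1$ then $a^{xp-i}b^{-j} \le 1$, not $\ge 1$ --- but the case analysis you outline is exactly the paper's argument.)
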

\begin{proof}
By the remark after Propositions \ref{torus} and \ref{cable}, $i$ can be chosen such that $0 < i < x$, and hence $j < 0$; since $q = pxy - 1$, $u$ and $v$ can be chosen to be $xy$ and $1$ respectively. Then
\begin{align*}
t^p
  &= \mu^q \lambda^p                &&\text{by equation (\ref{t})}
\\&= \mu^{-1} (\mu^{xy} \lambda)^p	&&\text{by the choice of $q$ and commutivity of $\mu$ and $\lambda$}
\\&= \mu^{-1} a^{xp}                &&\text{by equation (\ref{lambda})}
\\&= a^{-i} b^{-j} a^{xp}           &&{\text{by equation (\ref{mu})}}
\\&= a^{xp - i} b^{-j}				&&{\text{by commutivity of $a^{x}$ and $b$}}
\end{align*}
Equation (\ref{ab}) implies $a$ and $b$ have the same sign. Since $p \ge 1$, $xp - i > 0$. Therefore, $t^p = a^{xp - i} b^{-j}$, and hence $t$, has the same sign as $a$ and $b$.
\end{proof}

We begin the proof by observing the fundamental group of near-$pq$-surgeries of cable knots, or more specifically, $pq - 1/\beta$-surgeries. It will be proven in Theorem \ref{discrete} that, for every $\beta \ge 1$, if $x$, $y$, $p$ and $q$ are nice, or more precisely, as assumed in Theorem \ref{grand total}, the generators $a$, $b$ and $t$ have contradictory signs under any left order, and hence $H$ cannot be left-orderable.

\begin{lemma}
If $H$ is left-orderable and $m/n = pq - 1/\beta$, then \begin{equation} \label{r'=pq-1/beta} t^{p\beta + v} = \mu^u \lambda^v \end{equation}.
\end{lemma}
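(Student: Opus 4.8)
The plan is to derive the identity directly from the surgery relation (\ref{surgery}), using nothing more than the commutativity relations recorded in Proposition \ref{commute}; in fact left-orderability plays no role in the identity itself, but we keep it in the hypothesis since the lemma is only ever invoked in that setting. First I would put the surgery coefficient into lowest terms: since $pq - 1/\beta = (pq\beta - 1)/\beta$ and any common divisor of $pq\beta - 1$ and $\beta$ must divide $1$, we may take $m = pq\beta - 1$ and $n = \beta$. Thus in $H$ the relation (\ref{surgery}) reads $\mu_C^{\,pq\beta - 1}\lambda_C^{\,\beta} = 1$.

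Next I would eliminate $\lambda_C$. By equation (\ref{lambda_C}) we have $\lambda_C = \mu_C^{-pq} t^p$, and by part (4) of Proposition \ref{commute} the elements $\mu_C$ and $t^p$ commute, so $\lambda_C^{\,\beta} = \mu_C^{-pq\beta}\, t^{p\beta}$. Substituting this into the surgery relation and cancelling the powers of $\mu_C$ (legitimate for the same reason) gives $\mu_C^{-1} t^{p\beta} = 1$, i.e. $t^{p\beta} = \mu_C$. I would then expand $\mu_C$ using equation (\ref{mu_C}), obtaining $t^{p\beta} = \mu^u \lambda^v t^{-v}$, and multiply on the right by $t^v$ to conclude $t^{p\beta + v} = \mu^u \lambda^v$, which is exactly equation (\ref{r'=pq-1/beta}).

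There is no substantive obstacle here: the only thing to be careful about is that each regrouping of powers of $\mu_C$, $\lambda_C$, and $t$ above is justified by Proposition \ref{commute}, and that the final step uses only right-multiplication by $t^v$, so that no commutation between $t$ and $\mu^u\lambda^v$ is required.
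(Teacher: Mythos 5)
Your proof is correct and follows essentially the same route as the paper: reduce $pq-1/\beta$ to lowest terms as $(pq\beta-1)/\beta$, rearrange the surgery relation using the commutativity from Proposition \ref{commute} to obtain $t^{p\beta}=\mu_C$, and then expand $\mu_C$ via equation (\ref{mu_C}). Your side observation that left-orderability is not actually needed for the identity is also accurate.
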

\begin{proof}
Let $m=pq\beta-1$ and $n=\beta$. Then $m$ and $n$ are relatively prime and $m/n = pq - 1/\beta$. Equation (\ref{surgery}) then implies $\mu_C^{pq\beta-1}\lambda_C^{\beta}=1$. Since $\mu_C$ and $\lambda_C$ commute, rearrangement gives $(\mu_C^{pq} \lambda_C)^{\beta}=\mu_C$. The lemma follows from equations (\ref{mu_C}) and (\ref{lambda_C}).
\end{proof}



Note the similarity between equations (\ref{r'=pq-1/beta}), (\ref{lambda}) and (\ref{t}), each of which expresses a power of a generating element as the product of powers of $\mu$ and $\lambda$. The assumption of Theorem \ref{grand total} can now be used to obtain relationships among the generating elements $a$, $b$ and $t$ from these equations, and hence the conclusion of Theorem \ref{grand total} can be established for the sequence $(r'_\beta) = (pq - 1/\beta)$ converging to $pq$, as in the following theorem.

\begin{thm} \label{discrete}
Under the assumptions of Theorem \ref{grand total}, for every positive integer $\beta$, the fundamental group of $S^3_{r'_\beta}(C_{p,q}(K))$, where $r'_\beta = pq-1/\beta$, is not left-orderable.
\end{thm}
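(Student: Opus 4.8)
The plan is to assume $H = \pi_1(S^3_{r'_\beta}(C_{p,q}(K)))$ carries a left order and derive a sign contradiction among $a$, $b$, $t$, exactly as foreshadowed after Lemma~\ref{same sign}. First I would fix the convenient choices of the auxiliary integers: take $0 < i < x$ (so $j < 0$ by equation~(\ref{xy})), and, since $q = pxy - 1$, take $u = xy$ and $v = 1$; note also $q = pxy-1 = py \cdot x - 1$, so with these choices equation~(\ref{r'=pq-1/beta}) becomes $t^{p\beta + 1} = \mu^{xy}\lambda$. But the right-hand side is precisely $\mu^{xy}\lambda$, and from the computation in Lemma~\ref{same sign} we already know $\mu^{xy}\lambda = a^{xp}$ (this used only equation~(\ref{lambda}) and the choice of $q$). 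Hence in $H$ we get the clean relation
\begin{equation} \label{key}
 t^{p\beta + 1} = a^{xp}.
\end{equation}

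Next I would play equation~(\ref{key}) against Lemma~\ref{same sign}, which gives $t^p = a^{xp - i} b^{-j}$. Raising the latter to the $(p\beta+1)$-th power (legitimate since $t$ commutes with everything in sight, being built from $\mu,\lambda$, or alternatively working with the commuting triple of Proposition~\ref{commute}(3)) and comparing with the $p$-th power of equation~(\ref{key}), I get two expressions for $t^{p(p\beta+1)}$, one a power of $a$ alone and one a product of powers of $a$ and $b$; since $a^x = b^y$ makes the relevant subgroup well-understood, this should pin down $b^{-j}$ (equivalently a power of $b$) as an explicit power of $a$, i.e.\ a relation of the form $b^{N} = a^{M}$ for explicit integers $N, M$ depending on $x,y,p,\beta$. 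The arithmetic here is the routine part; the point I must check is that the exponents do not conspire to make the relation trivial.

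Now the sign analysis. By Lemma~\ref{same sign}, $a$, $b$, $t$ all have the same sign; without loss of generality (replacing the order by its reverse if necessary) assume $a, b, t > 1$. Then by Proposition~\ref{LO}(4) every positive power of each of them is $> 1$. The contradiction must come from an equation asserting that a positive power of a positive element equals a \emph{different} positive power that the group relations force to be strictly smaller — concretely, I expect to combine equation~(\ref{key}) with the torus relation $a^x = b^y$ and the meridian relation~(\ref{mu}) to produce something like $a^{xp} = t^{p\beta+1}$ together with an independent estimate $t^{p\beta+1} < a^{xp}$ (or $> $), forcing the offending element to be trivial, which then collapses $H$ or contradicts Proposition~\ref{LO}(3). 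The cleanest route is probably: express $\mu$ via equation~(\ref{mu}) and equation~(\ref{key}) to realize $\mu$ itself as a product of commuting powers of $a$ (and hence of $t$), then observe that $\lambda = \mu^{-xy}a^x$ together with $t^p = \mu^q\lambda^p$ over-determines $t$, yielding $t^{k} = 1$ for some $k \neq 0$ and hence $t = 1$ by Proposition~\ref{LO}(4) — contradicting that $t$ has the same (nontrivial) sign as $a$.

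The main obstacle I anticipate is \emph{not} any single inequality but making sure the linear-algebra bookkeeping of exponents actually closes up: I need the system consisting of equation~(\ref{key}), equation~(\ref{t}), equation~(\ref{lambda}), equation~(\ref{mu}) and $a^x = b^y$ to have rank forcing a nontrivial torsion relation on $t$ (or on $a$, or $b$) for \emph{every} $\beta \ge 1$, and this is exactly where the hypotheses $p \ge 2$ and $q = pxy - 1$ must be used — if either failed, the exponents could align and the group would stay left-orderable. So the crux is verifying that $\gcd$ / determinant condition uniformly in $\beta$, after which Proposition~\ref{LO}(4) delivers the contradiction mechanically.
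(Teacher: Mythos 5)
There is a genuine gap, and it begins at your ``clean relation.'' With $u = xy$ and $v = 1$, equation (\ref{r'=pq-1/beta}) reads $t^{p\beta+1} = \mu^{xy}\lambda$, and by equation (\ref{lambda}) alone $\mu^{xy}\lambda = \mu^{xy}(\mu^{-xy}a^x) = a^x$. So the relation you should obtain is $t^{p\beta+1} = a^x$ ($= b^y$), not $t^{p\beta+1} = a^{xp}$. What the computation inside Lemma \ref{same sign} actually shows is $(\mu^{xy}\lambda)^p = a^{xp}$; you have dropped the $p$-th power. Since all of your subsequent exponent bookkeeping is keyed to this identity, the error propagates.

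The second, more structural, problem is that the contradiction is never actually produced: everything after the key relation is phrased as ``should pin down,'' ``I expect,'' ``probably,'' and the mechanism you finally settle on --- forcing a torsion relation $t^k = 1$ with $k \neq 0$ --- is not what these relations yield, nor is it needed. The argument closes in one step: Lemma \ref{same sign} gives $t^p = a^{xp-i}b^{-j}$, and dividing this by $t^{p\beta+1} = a^x$ (using that $a^x$ commutes with $b$) gives $t^{-[p(\beta-1)+1]} = a^{(p-1)x-i}b^{-j}$. Since $p \geq 2$, $0 < i < x$ and $j < 0$, the right-hand side is a product of positive powers of $a$ and $b$ and so shares their common sign, while the left-hand side is a strictly negative power of $t$ and so has sign opposite to $t$; this contradicts Lemma \ref{same sign} (and the degenerate case $a=b=t=1$ makes $H$ trivial, hence not left-orderable). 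Your alternative plan of comparing the $p$-th power of the key relation with the $(p\beta+1)$-th power of $t^p = a^{xp-i}b^{-j}$ can also be made to work once the key relation is corrected --- one then obtains $b$ raised to a positive exponent equal to $a$ raised to a negative exponent, again a sign contradiction --- but as written neither the key identity nor the final contradiction is established.
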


\begin{proof}
Suppose the group is left-orderable. As in the proof of Lemma \ref{same sign}, assume $u = xy$, $v = 1$, $0 < i < x$ and $j < 0$.

Equation (\ref{r'=pq-1/beta}) then becomes $t^{p \beta + 1} = \mu^{xy} \lambda$, which combines with equations (\ref{lambda}) and (\ref{ab}) to give \begin{equation} \label{eq1} t^{p \beta + 1} = a^x = b^y. \end{equation}
Lemma \ref{same sign} shows $t^p = a^{px - i} b^{-j}$. Dividing this by equation (\ref{eq1}) gives \begin{equation} t^{-[p(\beta - 1) + 1]} = a^{(p-1)x - i} b^{-j}. \end{equation}
Then since $p \ge 2 $, $(p-1)x - i$ and $-j$ are both positive, and hence $a^{(p-1)x - i} b^{-j}$ has the same sign as $a$, $b$ and $t$. However, $-[p(\beta - 1) + 1]$ is negative for positive $\beta$, and hence $t^{-[p(\beta - 1) + 1]}$ does not have the same sign as $t$. This contradicts with Lemma \ref{same sign}.
\end{proof}

Observe that Theorem \ref{discrete} can be interpreted as follows.

\begin{cor} \label{cor of discrete case}
Under the same assumptions of Theorem \ref{grand total}, in every left-orderable quotient of $G$, $\mu_C^{pq \beta-1} \lambda_C^\beta \neq 1$.
\end{cor}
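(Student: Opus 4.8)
The plan is to derive this corollary as an immediate reformulation of Theorem \ref{discrete}, exploiting the dictionary between Dehn surgery and quotients of knot groups established in the Background section. The key observation is that the statement ``$\mu_C^{pq\beta - 1}\lambda_C^\beta \neq 1$ in every left-orderable quotient of $G$'' is logically equivalent to ``the quotient of $G$ by the additional relation $\mu_C^{pq\beta-1}\lambda_C^\beta = 1$ is not left-orderable,'' together with the bookkeeping that this particular quotient is exactly $\pi_1(S^3_{r'_\beta}(C_{p,q}(K)))$ for $r'_\beta = pq - 1/\beta$.

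First I would recall from the Dehn surgery proposition that $\pi_1(S^3_{m/n}(C_{p,q}(T_{x,y}))) = G/\langle\!\langle \mu_C^m \lambda_C^n\rangle\!\rangle$, and that choosing $m = pq\beta - 1$, $n = \beta$ gives $\gcd(m,n) = 1$ with $m/n = pq - 1/\beta = r'_\beta$, so this quotient is precisely the group $H$ appearing in Theorem \ref{discrete}. Next I would argue the contrapositive: suppose some left-orderable quotient $Q$ of $G$ satisfied $\mu_C^{pq\beta-1}\lambda_C^\beta = 1$. Then $Q$ factors through $G/\langle\!\langle \mu_C^{pq\beta-1}\lambda_C^\beta\rangle\!\rangle = H$, so $H$ surjects onto the nontrivial left-orderable group $Q$; since a group admitting a surjection onto a nontrivial left-orderable group is itself left-orderable (pull back the positive cone, or equivalently a nontrivial left-orderable quotient forces left-orderability of the source), $H$ would be left-orderable, contradicting Theorem \ref{discrete}. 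Conversely, if $H$ itself is left-orderable it is a left-orderable quotient of $G$ in which the relation trivially holds, so the two statements match up exactly.

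The main (and only genuine) obstacle is a small subtlety about the word ``quotient'' and about triviality: one must ensure that when we say ``left-orderable quotient'' we mean a quotient that is a nontrivial left-orderable group, and that the relation $\mu_C^{pq\beta-1}\lambda_C^\beta = 1$ need not be imposed as a full normal closure in $Q$ but simply holds as an equation there — both points are handled by the observation that any group homomorphically mapped onto by $G$ in which that element dies factors through $H$. Everything else is formal. I would therefore keep the proof to two or three sentences: invoke the surgery presentation to identify $H$, note that a group surjecting onto a nontrivial left-orderable group is left-orderable, and conclude that the existence of such a quotient of $G$ would contradict Theorem \ref{discrete}.
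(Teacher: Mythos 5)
Your reduction hinges on the claim that a group surjecting onto a nontrivial left-orderable group is itself left-orderable, and that claim is false; this is a genuine gap. Left-orderability passes to subgroups (restrict the order) but does not pull back along surjections: $\mathbb{Z}/2\mathbb{Z} \times \mathbb{Z}$ surjects onto $\mathbb{Z}$ yet is not left-orderable. Concretely, if $\phi : H \to Q$ is a surjection with nontrivial kernel and $P$ is the positive cone of a left order on $Q$, then $\phi^{-1}(P)$ is closed under multiplication but $H \neq \phi^{-1}(P) \sqcup \{1\} \sqcup \phi^{-1}(P)^{-1}$, since every nontrivial element of $\ker\phi$ is omitted; so "pull back the positive cone" does not produce a left order. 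Consequently, knowing only the \emph{statement} of Theorem \ref{discrete} (that $H = G/\langle\!\langle \mu_C^{pq\beta-1}\lambda_C^\beta\rangle\!\rangle$ is not left-orderable) does not rule out a left-orderable proper quotient $Q$ of $H$, which is exactly the kind of quotient of $G$ the corollary must exclude.

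The repair, which is what the paper's one-line proof is gesturing at, is to observe that the \emph{proof} of Theorem \ref{discrete} applies verbatim in any left-orderable quotient $Q$ of $G$ in which $\mu_C^{pq\beta-1}\lambda_C^\beta = 1$: the preceding lemma derives $t^{p\beta+v} = \mu^u\lambda^v$ purely from that relation and commutativity, Lemma \ref{same sign} is stated for every quotient of $G$, and the sign contradiction in Theorem \ref{discrete} uses only these facts together with a left order on the quotient at hand. So the obstruction is to the relation holding in \emph{any} left-orderable quotient, not merely to the left-orderability of the one group $H$. Your identification of $H$ with $\pi_1(S^3_{r'_\beta}(C_{p,q}(K)))$ and the observation that any $Q$ killing the element factors through $H$ are both fine; it is only the final inference from $Q$ back to $H$ that needs to be replaced by rerunning the argument inside $Q$ itself.
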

\begin{proof}
The theorem shows that equation (\ref{surgery}) obstructs left-orderability of a quotient of $G$.
\end{proof}

Clay and Watson's obstruction \cite{CW obstruct} can be used to fill in the gap between points in the sequence in the above theorem. However, its full generality is not needed in this case. The elementary technique in \cite{CW decay} and \cite{CW obstruct} of applying Cramer's rule suffices.

\begin{prop} \label{sim CW thm}
Let $\{M,L\}$ be the canonical basis for the peripheral subgroup of a knot group $G$. If, in a left order of a quotient of $G$, $M^{m_0} L^{n_0}$ and $M^{m_1} L^{n_1}$ have the same sign, then they have the same sign as $M^m L^n$ whenever $\frac{m}{n} \in (\frac{m_0}{n_0},\frac{m_1}{n_1})$.
\end{prop}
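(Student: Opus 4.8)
The plan is to express $M^m L^n$, after raising it to a suitable positive power, as a product of nonnegative powers of $M^{m_0} L^{n_0}$ and $M^{m_1} L^{n_1}$, and then read off its sign using Proposition \ref{LO}. Relabelling the two indices if necessary, we may assume $\frac{m_0}{n_0} < \frac{m}{n} < \frac{m_1}{n_1}$, the interval $(\frac{m_0}{n_0},\frac{m_1}{n_1})$ being understood as the open interval between its endpoints. Throughout I use that $M$ and $L$ commute, so that the peripheral subgroup is abelian and Proposition \ref{LO}(4) applies to its elements.

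First I would solve the rational linear system $c_0 m_0 + c_1 m_1 = m$ and $c_0 n_0 + c_1 n_1 = n$ by Cramer's rule, obtaining $c_0 = (m n_1 - m_1 n)/(m_0 n_1 - m_1 n_0)$ and $c_1 = (m_0 n - m n_0)/(m_0 n_1 - m_1 n_0)$. Writing each numerator and the common denominator as a positive quantity times a difference of slopes --- for instance $m n_1 - m_1 n = n n_1(\frac{m}{n} - \frac{m_1}{n_1})$ when $n, n_1 > 0$, with analogous expressions in the remaining sign cases --- the hypothesis that $\frac{m}{n}$ lies strictly between $\frac{m_0}{n_0}$ and $\frac{m_1}{n_1}$ forces $c_0 > 0$ and $c_1 > 0$. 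I expect this sign bookkeeping to be the only genuinely fiddly point; everything else is formal.

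Next, pick a positive integer $N$ clearing the denominators of $c_0$ and $c_1$, and set $k_0 = N c_0$ and $k_1 = N c_1$, which are positive integers satisfying $Nm = k_0 m_0 + k_1 m_1$ and $Nn = k_0 n_0 + k_1 n_1$. Since $M$ and $L$ commute, this yields
\[
(M^m L^n)^N = M^{Nm} L^{Nn} = (M^{m_0} L^{n_0})^{k_0}(M^{m_1} L^{n_1})^{k_1}
\]
in the quotient group. If $M^{m_0} L^{n_0}$ and $M^{m_1} L^{n_1}$ are both positive, the right-hand side is a product of positive powers of positive elements, hence positive by Proposition \ref{LO}(2) and (4), so $(M^m L^n)^N > 1$ and therefore $M^m L^n > 1$ by Proposition \ref{LO}(4). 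If both lie in $P^{-1}$, apply the same computation to $M^{-m_0} L^{-n_0}$ and $M^{-m_1} L^{-n_1}$ to obtain $(M^{-m} L^{-n})^N > 1$, whence $M^m L^n \in P^{-1}$; and if both equal $1$, then $(M^m L^n)^N = 1$, which forces $M^m L^n = 1$ by Proposition \ref{LO}(3) and (4). In every case $M^m L^n$ has the same sign as $M^{m_0} L^{n_0}$ and $M^{m_1} L^{n_1}$, which is what we wanted.
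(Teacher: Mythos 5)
Your argument is correct and is essentially the paper's own proof: both use Cramer's rule to express a positive power of $M^m L^n$ as a product of positive powers of $M^{m_0}L^{n_0}$ and $M^{m_1}L^{n_1}$, your integers $k_0, k_1, N$ being positive rational multiples of the paper's determinants $\Delta_0, \Delta_1, \Delta$. Your explicit treatment of the three sign cases and of denominator-clearing is just a more detailed write-up of the same computation.
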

\begin{proof}
Let
$\Delta_0 = \left | \begin{array}{cc}
n & n_1 \\
m & m_1  \end{array} \right |$,
$\Delta_1 = \left | \begin{array}{cc}
n_0 & n \\
m_0 & m \end{array} \right |$,
and $\Delta = \left | \begin{array}{cc}
n_0 & n_1 \\
m_0 & m_1 \end{array} \right |$.
By Cramer's rule,
\begin{equation}
	\begin{split}
		n_0 \Delta_0 + n_1 \Delta_1 =& n \Delta \\
		m_0 \Delta_0 + m_1 \Delta_1 =& m \Delta
	\end{split}
\end{equation}

This implies $(M^{m_0} L^{n_0})^{\Delta_0} (M^{m_1} L^{n_1})^{\Delta_1} = (M^m L^n)^\Delta$.
By assumption, $M^{m_0} L^{n_0}$ and $M^{m_1} L^{n_1}$ have the same sign. Since $\frac{m_0}{n_0} < \frac{m}{n} < \frac{m_1}{n_1}$, $\Delta_0$, $\Delta_0$ and $\Delta$ are all positive. Then $(M^m L^n)^\Delta$, and hence $M^m L^n$, has the same sign as $M^{m_0} L^{n_0}$ and $M^{m_1} L^{n_1}$.
\end{proof}

Theorem \ref{grand total} can now be proven.

\begin{proof} [proof of Theorem \ref{grand total}]

Equation (\ref{lambda_C}) implies $t^p = \mu_C^{pq}\lambda_C$. Then by Proposition \ref{sim CW thm}, it suffices to show $\mu_C^{pq-1}\lambda_C$ has the same sign as $t$ in every left order. Indeed, if the fundamental group of $S^3_{r'}(C)$, which is $G/\langle \mu_C^{m'} \lambda_C^{n'} \rangle$ with $r' = m'/n'$, is left-orderable, Proposition \ref{sim CW thm} implies that for $r' = m'/n' \in (pq-1,pq)$, $\mu_C^{pq-1}\lambda_C$ has the same sign as $\mu_C^{m'} \lambda_C^{n'} = 1$, and hence $\mu_C^{pq-1}\lambda_C$ = 1. This contradicts with Corollary \ref{cor of discrete case}.

As in the proof of Lemma \ref{same sign}, assume $u = xy$, $v = 1$, $0 < i < x$ and $j < 0$.

\begin{align*}
\mu_C^{pq-1} \lambda_C
  &= \mu_C^{-1} (\mu_C^{pq} \lambda_C)
\\&= \mu_C^{-1} t^p                            &&\text{by equation (\ref{lambda_C})}
\\&= t (\lambda^{-1} \mu^{-xy}) t^p            &&\text{by equation (\ref{mu_C}) and the choice of $u$ and $v$}
\\&= t a^{-x} t^p                              &&\text{by equation (\ref{lambda})}
\\&= t a^{-x} (a^{xp-i} b^{-j})                &&\text{by Lemma \ref{same sign}}
\\&= t a^{x(p-1)-i} b^{-j}
\end{align*}

By Lemma \ref{same sign}, $t$, $a$ and $b$ have the same sign. Since $p \geq 2$, the choice of $i$ and $j$ implies,  $x(p-1) - i$ and $-j$ are positive, so $\mu_C^{xy-1} \lambda_C = t a^{x(p-1)-i} b^{-j}$ has the same sign as $t$. The contradiction then follows.

\end{proof}


\begin{rmk}
	Note that for $r'=pq-1/\beta$, Theorem \ref{discrete} and the above proof gives two proofs of the non-left orderability of the corresponding fundamental group. Even though the proof of Theorem \ref{discrete} is longer, it is also more elementary.
\end{rmk}


\begin{thebibliography}{9}
\bibitem{BC}
	Steven Boyer and Adam Clay. Slope detection, foliations in graph
manifolds, and L-spaces. Preprint, arXiv:1510.02378v1
\bibitem{CW decay}
	Adam Clay and Liam Watson. On cabled knots, Dehn surgery, and left-orderable fundamental groups. \emph{Math. Res. Lett.} 18(6): 1085–95, 2011.
\bibitem{CW obstruct}
	Adam Clay and Liam Watson. Left-Orderable Fundamental Groups and Dehn Surgery. \emph{Int. Math. Res. Not.} 2013(12): 2862-2890, 2013.
\bibitem{G}
	Cameron McAllan Gordon. Dehn surgery and satellite knots. \emph{Trans. Amer. Math. Soc.} 275: 687-708, 1983.
\bibitem{H}
	Matthew Hedden. On knot Floer homology and cabling II.  \emph{Int. Math. Res. Notices} 2009(12): 2248-2274, 2009.
\bibitem{HRRW}
	Jonathan Hanselman, Jacob Rasmussen, Sarah Dean Rasmussen, and Liam Watson. Taut foliations on graph manifolds. arXiv:1508.05911


\end{thebibliography}
\end{document}